\title{Toric Fano contractions associated to long extremal rays} 
\author[Osamu Fujino]{Osamu Fujino${}^*$}
\thanks{*Partially 
supported by JSPS KAKENHI Grant 
Numbers JP16H03925, JP16H06337. }
\author[Hiroshi Sato]{Hiroshi Sato${}^{**}$}
\thanks{**Partially supported by JSPS KAKENHI 
Grant Number JP18K03262.}
\dedicatory{Dedicated to Professor Shoetsu Ogata on 
the occasion of his sixtieth birthday}
\subjclass[2010]{Primary 14M25; Secondary 14E30.}
\date{2018/8/13, version 0.26}
\keywords{Toric Mori theory, lengths of 
extremal rays, Fano contractions, projective bundles}
\address{Department of Mathematics, Graduate School of 
Science, Osaka University, Toyonaka, Osaka 560-0043, Japan}
\email{fujino@math.sci.osaka-u.ac.jp}
\address{Department of Applied Mathematics, Faculty of Sciences, 
Fukuoka University, 8-19-1, Nanakuma, Jonan-ku, Fukuoka 814-0180, Japan}
\email{hirosato@fukuoka-u.ac.jp}
\newcommand{\Pic}[0]{{\operatorname{Pic}}}
\newcommand{\mult}[0]{{\operatorname{mult}}}
\newcommand{\NE}[0]{{\operatorname{NE}}}
\newcommand{\G}[0]{{\operatorname{G}}}
\newtheorem{thm}{Theorem}[section]
\newtheorem{cor}[thm]{Corollary}
\newtheorem{prop}[thm]{Proposition}
\theoremstyle{definition}
\newtheorem{ex}[thm]{Example}
\newtheorem{rem}[thm]{Remark}
\newtheorem*{ack}{Acknowledgments}       
\begin{document}
\bibliographystyle{amsalpha+}

\begin{abstract}
We show that 
a toric Fano contraction associated to 
an extremal ray whose length is greater than 
the dimension of its fiber 
is a projective space bundle. 
\end{abstract}

\maketitle

\tableofcontents
\section{Introduction} 

Let $X$ be a smooth projective variety defined over an algebraically closed 
field $k$ of arbitrary characteristic. 
In his epoch-making paper (see \cite{mori}), Shigefumi Mori 
established the following famous cone theorem 
\[
\overline\NE(X)=\overline\NE(X)_{K_X\geq 0}+\sum R_j, 
\]
where $\overline\NE(X)$ denotes the Kleiman--Mori cone of $X$ and 
each $R_j$ is called a $K_X$-negative extremal ray of $\overline\NE(X)$. 
By the original proof of the above cone theorem, which 
is based on Mori's bend and break technique 
to create rational curves, we know that for each $K_X$-negative extremal 
ray $R$ there exists a (possibly singular) rational curve $C$ on 
$X$  such that the numerical equivalence class of $C$ spans $R$ and 
\[
0<-K_X\cdot C\leq \dim X+1
\]
holds. 

Let $X$ be a $\mathbb Q$-Gorenstein projective algebraic 
variety for which 
the cone theorem holds. 
Then for a $K_X$-negative extremal ray $R$ of $\overline\NE(X)$, we 
put 
\[
l(R):=\min_{[C]\in R}(-K_X\cdot C)
\]
and call it the {\em{length}} of $R$. 
We have already known that $l(R)$ is 
an important invariant and that some conditions on $l(R)$ 
determine the structure of the associated extremal contraction. 

In this paper, we are interested in the case where 
$X$ is a toric variety. 
We note that $\NE(X)=\overline\NE(X)$ holds 
when $X$ is a projective toric variety. 
This is because $\NE(X)$ is a rational polyhedral cone. 
We also note that the cone theorem holds for 
$\mathbb Q$-Gorenstein projective toric varieties 
without any extra assumptions. 

From now on, we will only treat $\mathbb{Q}$-factorial 
projective toric varieties defined over 
an algebraically closed field $k$ of arbitrary characteristic for simplicity. 

\medskip 

For a $\mathbb{Q}$-factorial projective toric $n$-fold $X$ of 
Picard number $\rho(X)=1$, there exists the unique 
extremal ray of $\NE(X)$. In this case, the following statement holds. 

\begin{thm}[{\cite[Proposition 2.9]{fujino-notes} and 
\cite[Proposition 2.1]{fujino-osaka}}]\label{rho1}
Let $X$ be a $\mathbb{Q}$-factorial projective toric $n$-fold 
of Picard number $\rho(X)=1$ with 
$R=\NE(X)$. Then, the following statements hold.
\begin{enumerate}
\item 
If $l(R)>n$, then $X\simeq \mathbb{P}^n$.
\item
If $l(R)\ge n$ and $X\not\simeq \mathbb{P}^n$, 
then $X\simeq\mathbb{P}(1,1,2,\ldots, 2)$. 
\end{enumerate}
\end{thm}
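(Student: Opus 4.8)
The plan is to translate the hypothesis on $l(R)$ into a purely combinatorial condition on a weight system attached to the fan and then to run an elementary number-theoretic analysis.

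First I would record the structure of $X$. Since $X$ is $\mathbb Q$-factorial, projective and toric with $\rho(X)=1$, its fan $\Sigma$ is a complete simplicial fan in $N_{\mathbb R}\cong\mathbb R^n$ with exactly $n+1$ rays; let $v_0,\dots,v_n$ be their primitive generators. They satisfy a relation unique up to scale, and completeness (the $v_i$ positively span $N_{\mathbb R}$) forces all its coefficients to have the same sign, so I may write $\sum_{i=0}^n a_iv_i=0$ with coprime positive integers $a_i$, ordered $a_0\le\cdots\le a_n$. A short argument shows this weight system is \emph{well-formed}: if a prime $p$ divided $a_i$ for all $i\ne j$, then $a_jv_j=-\sum_{i\ne j}a_iv_i\in pN$, and primitivity of $v_j$ forces $p\mid a_j$, so $p\mid\gcd_i a_i=1$, a contradiction. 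Thus $X$ is a fake weighted projective space; writing $N':=\sum_i\mathbb Zv_i\subseteq N$ and $\mu:=[N:N']$, the maximal cone $\sigma_k$ obtained by deleting $v_k$ satisfies $\mult(\sigma_k)=\mu a_k$.

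Next I would compute the length. The torus-invariant curves are the $C_{ij}:=V(\tau_{ij})$, where $\tau_{ij}$ is the wall spanned by all rays except $v_i,v_j$, and since $\rho(X)=1$ they all span $R$. The $a_i$ are exactly the degrees of the $D_i$ under the isomorphism $\operatorname{Cl}(X)_{\mathbb Q}\cong\mathbb Q$, so for the positive generator $H$ one has $D_i\equiv a_iH$ and hence $-K_X=\sum_iD_i\equiv\bigl(\sum_l a_l\bigr)H$. Combining the toric intersection identity $D_i\cdot C_{ij}=\mult(\tau_{ij})/\mult(\sigma_j)$ with $\mult(\sigma_j)=\mu a_j$ gives $H\cdot C_{ij}=\mult(\tau_{ij})/(\mu a_ia_j)$, whence
\[
-K_X\cdot C_{ij}=\frac{\mult(\tau_{ij})}{\mu\,a_ia_j}\sum_{l}a_l .
\]
Because $\tau_{ij}$ is a facet of both $\sigma_i$ and $\sigma_j$, its multiplicity divides $\mult(\sigma_i)=\mu a_j$ and $\mult(\sigma_j)=\mu a_i$, hence divides $\mu\gcd(a_i,a_j)$. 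This yields, for every pair $i<j$, the clean estimate
\[
l(R)\le -K_X\cdot C_{ij}\le\frac{a_0+\cdots+a_n}{\operatorname{lcm}(a_i,a_j)} .
\]

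The endgame is then arithmetic. Applying the estimate to the pairs $(i,n)$ and using $\operatorname{lcm}(a_i,a_n)\ge a_n$ gives $l(R)\le\bigl(\sum_l a_l\bigr)/a_n$; together with $l(R)\ge n$ this forces $\operatorname{lcm}(a_i,a_n)<2a_n$ for $n\ge2$, so $a_i\mid a_n$ for all $i$ (the next multiple $2a_n$ is already too large). Writing $a_i=a_n/d_i$, the inequality $\sum_l a_l\ge n a_n$ becomes $\sum_i(1-1/d_i)\le1$, and checking each solution against well-formedness eliminates everything except $(a_i)=(1,\dots,1)$ and $(a_i)=(1,1,2,\dots,2)$; in the strict case $l(R)>n$ the latter (which needs $\sum_i(1-1/d_i)=1$ exactly) is excluded, leaving only all ones. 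Finally I would feed each surviving weight vector back into the exact formula above to see that a wall of minimal multiplicity gives $l(R)\le n/\mu$, whence $\mu=1$, the lattice is standard ($N=N'=\mathbb Z^{n+1}/(a_0,\dots,a_n)$), and therefore $X\simeq\mathbb P^n$, respectively $X\simeq\mathbb P(1,1,2,\dots,2)$.

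I expect the main obstacle to be precisely this last step when $n\ge3$: the divisibility $\mult(\tau_{ij})\mid\mu\gcd(a_i,a_j)$ makes $\mu$ cancel out of the convenient bound, so to rule out nontrivial fake structures I must control some $\mult(\tau_{ij})$ from \emph{below}, i.e. exhibit a wall whose multiplicity equals $\gcd(a_i,a_j)$ rather than $\mu\gcd(a_i,a_j)$; that wall is what converts $l(R)\ge n$ into $\mu\le1$. In dimension two this is automatic, since every wall is a single primitive ray of multiplicity one, which is exactly why the surface case of the estimate is already an equality; the higher-dimensional bookkeeping of the sublattice $N'\subseteq N$ is the part that requires the most care.
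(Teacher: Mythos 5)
The paper does not prove this statement; it is imported verbatim from \cite[Proposition 2.9]{fujino-notes} and \cite[Proposition 2.1]{fujino-osaka}, so there is no internal proof to compare against. Judged on its own, your reduction is correct and cleanly executed up to the last step: the fan has $n+1$ rays with a single positive relation $\sum a_iv_i=0$, the weight system is well-formed, $\mult(\sigma_k)=\mu a_k$, the intersection formula $-K_X\cdot C_{ij}=\mult(\tau_{ij})\bigl(\sum_l a_l\bigr)/(\mu a_ia_j)$ together with $\mult(\tau_{ij})\mid\mu\gcd(a_i,a_j)$ gives your bound, and the arithmetic correctly whittles the weights down to $(1,\ldots,1)$ and $(1,1,2,\ldots,2)$, with the latter killed when $l(R)>n$.

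The genuine gap is the one you flag yourself: ruling out $\mu:=[N:N']>1$, i.e.\ the fake weighted projective spaces, and your proposed patch does not work as stated. You want to ``exhibit a wall whose multiplicity equals $\gcd(a_i,a_j)$ rather than $\mu\gcd(a_i,a_j)$,'' but such a wall need not exist: take $n=3$, $v_i=e_i$ for $i=1,2,3$, $v_0=-(e_1+e_2+e_3)$, and $N=\mathbb{Z}^3+\mathbb{Z}\tfrac12(e_1+e_2)+\mathbb{Z}\tfrac12(e_1+e_3)$. All four $v_i$ are primitive in $N$, all weights are $1$, $\mu=4$, yet every one of the six walls has multiplicity $2$ --- neither $\gcd(a_i,a_j)=1$ nor $\mu$. (The theorem is not threatened, since here $-K_X\cdot C_{ij}=2\cdot 4/4=2<n$, but your route to proving it stalls.) What you actually need, and what does hold, is that the walls cannot \emph{all} have multiplicity as large as your inequality $\mult(\tau_{ij})\ge n\mu a_ia_j/\sum_l a_l$ forces. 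For the all-ones weights, note that $\mult(\tau_{in})=[N_{\tau_{in}}:N'_{\tau_{in}}]$ equals the order of the kernel of the homomorphism $\phi_i:N/N'\to\mathbb{Q}/\mathbb{Z}$ recording the $v_i$-coordinate mod $\mathbb{Z}$ (coordinates taken with respect to the basis $v_0,\ldots,v_{n-1}$ of $N'$), and $\mult(\tau_{ij})=|\ker(\phi_i-\phi_j)|$; the hypothesis $l(R)\ge n$ forces each of these kernels to have index $<2$ in $N/N'$, hence all $\phi_i=0$, hence $N\subseteq N'$ and $\mu=1$. An analogous (slightly longer) bookkeeping is needed for the weights $(1,1,2,\ldots,2)$ in part (2). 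Without some argument of this kind the proof establishes only that $X$ is a fake $\mathbb{P}^n$ or fake $\mathbb{P}(1,1,2,\ldots,2)$, which is strictly weaker than the statement.
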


For the case where the associated extremal contraction is 
birational, we have the following estimates 
which are special cases of \cite[Theorem 3.2.1]{fujino-sato3}. 

\begin{thm}\label{lengthbirational}
Let $X$ be a $\mathbb{Q}$-factorial 
projective toric $n$-fold, and let $R$ be a $K_X$-negative 
extremal ray of $\NE(X)$. Suppose that 
the contraction morphism $\varphi_R:X\to W$ 
associated to $R$ is birational. 
Then, we obtain 
\[
l(R)<d+1, 
\]
where \[d=\max_{w\in W} \dim \varphi^{-1}_R(w)\leq n-1. \] 
When $d=n-1$, we have a sharper inequality 
\[
l(R)\leq d=n-1. 
\]
In particular, if 
$l(R)=n-1$ holds, 
then $\varphi_R:X\to W$ can be described as follows. 
There exists a torus invariant smooth point $P\in W$ such that 
$\varphi_R:X\to W$ 
is a weighted blow-up at $P$ with the weight $(1, a, \ldots, a)$ for some 
positive integer $a$. 
In this case, the exceptional locus $E$ of $\varphi_R$ is a torus invariant 
prime divisor and is isomorphic to $\mathbb P^{n-1}$. 
\end{thm}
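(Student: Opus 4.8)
The plan is to read both inequalities and the equality description off the \emph{wall relation} attached to $R$, comparing $X$ with a general fibre of $\varphi_R$ through toric adjunction and then feeding that fibre into Theorem \ref{rho1}. First I would recall Reid's combinatorial picture of the toric extremal contraction. The ray $R$ is generated by a torus-invariant curve $C$ (the wall curve computing $l(R)$), and the primitive generators $u_\rho$ of the rays meeting $C$ satisfy a relation $\sum_\rho c_\rho u_\rho=0$ with integral coefficients of content one; one has $D_\rho\cdot C=c_\rho/m$ for a single $m\in\mathbb Q_{>0}$, so that $-K_X\cdot C=\tfrac1m\sum_\rho c_\rho$. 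Set $U=\{\rho:c_\rho>0\}$, $V=\{\rho:c_\rho<0\}$, $Z=\{\rho:c_\rho=0\}$. Because $\varphi_R$ is birational rather than of fibre type, $V\neq\emptyset$. Reid's analysis then identifies a general non-trivial fibre $F$ of $\varphi_R$ with the weighted projective space $\mathbb P(c_\rho:\rho\in U)$: it is a $\mathbb Q$-factorial projective toric variety with $\rho(F)=1$ and $\dim F=|U|-1\le d$, realised as the orbit closure $V(\tau)$ of $\tau=\langle V\cup Z\rangle$, and $C$ lies in $F$ as the minimal curve generating $\NE(F)$.

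Next I would compare $-K_X\cdot C$ with $-K_F\cdot C$. By toric adjunction for the orbit closure $F=V(\tau)$, whose rays are exactly $V\cup Z$, with effective different $\mathrm{Diff}\ge0$, and using $C\subset F$,
\[
-K_F\cdot C=-K_X\cdot C-\sum_{\rho\in V\cup Z}(D_\rho\cdot C)+\mathrm{Diff}\cdot C .
\]
The subtracted sum equals $\tfrac1m\sum_{\rho\in V}c_\rho<0$, since $Z$ contributes nothing and $V\neq\emptyset$ consists of rays with $c_\rho<0$; entering with a minus sign it thus contributes positively, and $\mathrm{Diff}\cdot C\ge0$, so $-K_X\cdot C<-K_F\cdot C$. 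Applying Theorem \ref{rho1} to $F$, which has Picard number one, gives $-K_F\cdot C=l(\NE(F))\le\dim F+1\le d+1$, whence $l(R)=-K_X\cdot C<d+1$. I expect the signs in this adjunction identity — in particular the effectivity of the different coming from the $\mathbb Q$-factorial singularities of $X$ along $F$ — to be the main point to pin down carefully, since a naive adjunction (ignoring the different) does not give the required inequality.

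For the case $d=n-1$ I would first observe that a fibre of dimension $n-1$ forces the general non-trivial fibre $F$ to be a divisor contracted to a point $P=\varphi_R(E)$, so $\varphi_R$ is divisorial with $|V|=1$, $Z=\emptyset$, $|U|=n$ and $\dim F=n-1$. Theorem \ref{rho1} applied to $F$ yields a dichotomy: if $F\not\cong\mathbb P^{n-1}$ then $-K_F\cdot C\le n-1$, whence $l(R)<n-1$ by the strict inequality above; if $F\cong\mathbb P^{n-1}$ a direct multiplicity computation gives $l(R)=n-1$. Either way $l(R)\le n-1$, with equality precisely when $F\cong\mathbb P^{n-1}$.

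Finally, assuming $l(R)=n-1$, I would analyse the local fan. Writing $V=\{\rho_0\}$ and the positive rays as $e_1,\dots,e_n$, the relation reads $\sum_i b_i e_i=u_{\rho_0}$ with $b_i>0$, and $e_1,\dots,e_n$ span the maximal cone whose torus fixed point is $P$. From $F=E=D_{\rho_0}\cong\mathbb P^{n-1}$ I would deduce that $e_1,\dots,e_n$ form a lattice basis, so $P$ is a smooth point of $W$, and that the weight vector satisfies $(b_1,\dots,b_n)=(1,a,\dots,a)$ up to order for some integer $a\ge1$, since $\mathbb P(b_1,\dots,b_n)\cong\mathbb P^{n-1}$ characterises exactly these weights. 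This exhibits $\varphi_R$ as the weighted blow-up of the smooth point $P$ with weight $(1,a,\dots,a)$ and $E=D_{\rho_0}\cong\mathbb P^{n-1}$, which a final multiplicity check reconciles with $l(R)=n-1$. The delicate steps here are extracting both the smoothness of $P$ and the precise weight vector from the single isomorphism $F\cong\mathbb P^{n-1}$.
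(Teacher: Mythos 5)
Your proposal cannot be checked against an internal argument: the paper does not prove Theorem \ref{lengthbirational} at all, but quotes it as a special case of \cite[Theorem 3.2.1]{fujino-sato3}. So it has to stand on its own, and while your first inequality is essentially sound, the case $d=n-1$ and the equality characterization contain a genuine gap.

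The argument for $l(R)<d+1$ works: Reid's wall relation, the identification of the invariant fibre $F=V(\langle V\cup Z\rangle)$ through $C$ as a $\mathbb Q$-factorial projective toric variety of dimension $\le d$ with $\rho(F)=1$, the adjunction inequality $-K_X\cdot C<-K_F\cdot C$ (effective different plus $D_\rho\cdot C<0$ for $\rho\in V\neq\emptyset$), and Theorem \ref{rho1} applied to $F$ do combine to give the bound. One step you assert without proof is that $C$, chosen to compute $l(R)$, is also the \emph{minimal} curve of $F$, i.e.\ $-K_F\cdot C=l(\NE(F))$; this is true but needs a remark (since $\rho(F)=1$, all curve classes in $F$ lie on one ray and both $-K_X|_F$ and $-K_F$ are positive linear functionals on it, so they have the same minimizers), or can be avoided altogether by running the adjunction inequality on a curve $C_0$ computing $l(\NE(F))$: as $C_0$ is contracted by $\varphi_R$, one gets $l(R)\le -K_X\cdot C_0<-K_F\cdot C_0=l(\NE(F))\le d+1$.

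The gap is in the branch $F\cong\mathbb P^{n-1}$ of your $d=n-1$ dichotomy, and it propagates into your final step. You claim that $F\cong\mathbb P^{n-1}$ forces, by ``a direct multiplicity computation'', the equality $l(R)=n-1$, and later that $F\cong\mathbb P^{n-1}$ alone forces $e_1,\ldots,e_n$ to be a lattice basis (so $P$ smooth) and the weights to be $(1,a,\ldots,a)$. All of these implications are false. Take $n=2$, where the exceptional divisor is automatically $\mathbb P^1=\mathbb P^{n-1}$: for the weighted blow-up of $\mathbb A^2$ at the origin with weight $(2,3)$, i.e.\ inserting $v_0=2e_1+3e_2$ into $\langle e_1,e_2\rangle$, one has $D_{e_1}\cdot E=\tfrac13$, $D_{e_2}\cdot E=\tfrac12$, $D_{v_0}\cdot E=-\tfrac16$, hence $l(R)=\tfrac23$, although $P$ is smooth and $E\cong\mathbb P^1$; and inserting $v_0=e_1+e_2$ into the cone $\langle e_2,\,2e_1-e_2\rangle$ (relation $3u_1+u_2=2v_0$) gives a $K_X$-negative divisorial contraction, $-K_X\cdot E=1+\tfrac13-\tfrac23=\tfrac23>0$, onto a \emph{singular} ($A_1$) point with $E\cong\mathbb P^1$. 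Consequently, in this branch your adjunction inequality only yields $l(R)<-K_F\cdot C=n$, which does not exclude (a priori non-integral) values of $l(R)$ in $(n-1,n)$ and hence does not prove $l(R)\le n-1$; and the structural conclusions cannot be read off from $F\cong\mathbb P^{n-1}$ — the logic must run from the hypothesis $l(R)=n-1$ to the structure, not the other way. (Even your auxiliary claim that $\mathbb P(b_1,\ldots,b_n)\cong\mathbb P^{n-1}$ characterizes the weights $(1,a,\ldots,a)$ fails: $\mathbb P(2,3,6)\cong\mathbb P^2$.) Closing this requires the finer divisibility analysis of the multiplicities $\mult(\tau)$ and $\mult(\sigma)$ along the walls — the kind of $\alpha\mid\beta$ argument the paper uses in the proof of Theorem \ref{maintheorem}, and which is carried out in \cite[Theorem 3.2.1]{fujino-sato3} — which your proposal replaces by an unproved claim. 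As written, your argument establishes only the first assertion, $l(R)<d+1$.
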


This estimate shows that the extremal ray $R$ with $l(R)>n-1$ must be 
of fiber type. In this case, we can determine the structure of the 
associated contraction $\varphi_R$ as follows.

\begin{thm}\label{lengthfanotop1} 
Let $X$ be a $\mathbb Q$-factorial 
projective toric $n$-fold with $\rho (X)\geq 2$, and 
let $R$ be a $K_X$-negative extremal ray of 
$\NE(X)$. If 
$l(R)>n-1$, 
then the extremal contraction $\varphi_R:X\to W$ associated to 
$R$ is a $\mathbb P^{n-1}$-bundle over $\mathbb P^1$. 
\end{thm}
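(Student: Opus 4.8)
The plan is to reduce $\varphi_R$ to a fiber-type contraction, pin down the numerical data by restricting to fibers, and then read the projective-bundle structure off the fan of $X$. First I would invoke Theorem~\ref{lengthbirational} to dispose of the birational case: a birational extremal contraction always has $l(R)\le n-1$, so the hypothesis $l(R)>n-1$ forces $\varphi_R$ to be of fiber type and $\dim W<n$. Since $\rho(X)\ge 2$ and $\varphi_R$ is elementary, $\rho(W)=\rho(X)-1\ge 1$, so $W$ is not a point and a general fiber $F$ has dimension $d=\dim X-\dim W\le n-1$.

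Next I would restrict to a general fiber $F$, which is a $\mathbb{Q}$-factorial projective toric variety with $\rho(F)=1$. As every curve $C\subset F$ satisfies $-K_X\cdot C=-K_F\cdot C$, one gets $l(\NE(F))\ge l(R)>n-1\ge d=\dim F$, and Theorem~\ref{rho1}~(1) then yields $F\simeq\mathbb{P}^{d}$ and $l(\NE(F))=d+1$. Now $d+1>n-1$ forces $d=n-1$, hence $\dim W=1$, $W\simeq\mathbb{P}^1$, $\rho(X)=2$, and $n-1<l(R)\le n$. Moreover, since $\dim W=1$ and $X$ is irreducible, $\varphi_R$ is automatically equidimensional with every fiber of dimension $n-1$.

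With $\rho(X)=2$ the variety $X$ has exactly $n+2$ rays. Writing $\pi\colon N\to N_W\simeq\mathbb{Z}$ for the lattice map of $\varphi_R$, I expect exactly $n$ of these rays to lie in $\ker\pi$, forming the fan of $F\simeq\mathbb{P}^{n-1}$ with primitive relation $w_0+\cdots+w_{n-1}=0$, while the remaining two rays $y_+,y_-$ satisfy $\pi(y_+)>0>\pi(y_-)$ and sit over the two torus fixed points of $\mathbb{P}^1$. The maximal cones are then $\langle y_\pm,\ \text{all but one of the }w_i\rangle$, which is already the shape of the fan of a projective space bundle over $\mathbb{P}^1$.

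The hard part will be to promote this to a genuine bundle, that is, to show $\pi(y_\pm)=\pm1$; this is exactly where the strict inequality $l(R)>n-1$ is needed. The reduced fiber over the fixed point of $y_+$ is the torus-invariant divisor $V(y_+)$, a $\mathbb{Q}$-factorial projective toric $(n-1)$-fold of Picard number $1$. Since $\varphi_R^{*}(\mathrm{pt})=\pi(y_+)\,V(y_+)$, the divisor $V(y_+)$ restricts numerically trivially to any curve lying in it, so adjunction gives $l(\NE(V(y_+)))\ge l(R)>n-1=\dim V(y_+)$, and Theorem~\ref{rho1}~(1) forces $V(y_+)\simeq\mathbb{P}^{n-1}$ (and similarly for $y_-$). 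On the other hand $V(y_+)$ is the quotient of $F\simeq\mathbb{P}^{n-1}$ by the finite group $N_{y_+}/\ker\pi\simeq\mathbb{Z}/\pi(y_+)$, so $V(y_+)\simeq\mathbb{P}^{n-1}$ can hold only if $\pi(y_+)=1$. Hence $\pi(y_\pm)=\pm1$, every maximal cone is smooth, $X$ is smooth, and $\varphi_R$ is a Zariski-locally trivial $\mathbb{P}^{n-1}$-bundle over $\mathbb{P}^1$. I expect the control of these two special fibers—excluding fake or weighted fibers such as $\mathbb{P}(1,1,2,\dots,2)$—to be the crux of the argument, with the length hypothesis serving precisely to rule them out.
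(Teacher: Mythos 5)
Your reduction to the fiber-type case, the identification $F\simeq\mathbb{P}^{n-1}$, $d=n-1$, $W\simeq\mathbb{P}^1$ via Theorems \ref{lengthbirational} and \ref{rho1}, and the description of the fan ($n$ rays $w_0,\dots,w_{n-1}$ in $\ker\pi$ plus two rays $y_\pm$) are all correct and consistent with how the paper obtains this statement (it is the special case $d=n-1$ of Corollary \ref{maincor}, itself a consequence of Theorem \ref{maintheorem}). The gap is in your final step. You deduce $V(y_+)\simeq\mathbb{P}^{n-1}$ from Theorem \ref{rho1} (1) and then conclude $\pi(y_+)=1$ because $V(y_+)$ is the quotient of $F\simeq\mathbb{P}^{n-1}$ by the cyclic group $\overline{N}/N'\simeq\mathbb{Z}/\pi(y_+)$, where $\overline{N}=N/\mathbb{Z}y_+$ and $N'=\ker\pi$. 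That implication is false as stated: a nontrivial finite toric quotient of $\mathbb{P}^{n-1}$ can again be isomorphic to $\mathbb{P}^{n-1}$. The case $n=2$ makes this unmistakable: there $V(y_\pm)$ is a complete toric curve, hence isomorphic to $\mathbb{P}^1$ for \emph{every} value of $\pi(y_\pm)$ (e.g.\ $y_+=(1,2)$ gives $V(y_+)\simeq\mathbb{P}^1$ while $\pi(y_+)=2$ and $-K_X\cdot V(y_+)=1$), so the abstract isomorphism type of the special fiber carries no information about the lattice index you need to control. For $n\ge 3$ the implication can be salvaged (a nontrivial fake quotient of $\mathbb{P}^{n-1}$ is singular), but you would have to prove this, and your argument as written gives nothing in the surface case, which is a genuine instance of the theorem (Hirzebruch surfaces versus their singular degenerations).

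The mechanism the paper actually uses is numerical rather than biregular: for the torus-invariant curve $C_r$ in the special fiber one has $D_r\cdot C_r=\alpha/\beta$ with $\alpha\mid\beta$ (Proposition \ref{kotensu}), so either $D_r\cdot C_r=1$ or $D_r\cdot C_r\le\tfrac12$; since $-K_X\cdot C_r=(d+1)D_r\cdot C_r$, the length hypothesis excludes the second alternative, and the equality $\alpha=\beta$ is then converted, via an elementary-divisor computation with the cofactor matrix of $A=(p(y_1),\dots,p(y_{n-d}))$, into surjectivity of $V_\sigma\cap N\to N_W$, which is exactly the local triviality of the bundle. In other words, the length hypothesis must be applied to the intersection numbers $D_r\cdot C_r$ on the special fibers, not to the isomorphism class of $V(y_\pm)$. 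If you replace your last paragraph by this computation (in your notation: $-K_X\cdot V(y_+)_{\text{curve}}$ picks up the factor $1/\pi(y_+)$, and $l(R)>n-1$ forces $\pi(y_+)=1$), the proof closes and agrees with the paper's.
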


\begin{rem}
Theorem \ref{lengthfanotop1} holds for 
projective $\mathbb{Q}$-Gorenstein toric varieties 
(without the assumption that $X$ is $\mathbb{Q}$-factorial). 
For the details, please see \cite[Proposition 3.2.9]{fujino-sato3}. 
\end{rem}

As a generalization of Theorem \ref{lengthfanotop1}, we prove 
the following theorem about the structure of 
extremal contractions of fiber type. 
More precisely, we will prove a sharper result in Section \ref{f-sec3} 
(see Theorem \ref{maintheorem}). Theorem \ref{intromain} is a direct 
easy consequence of Theorem \ref{maintheorem} (see Corollary \ref{maincor}). 

\begin{thm}[Main theorem]\label{intromain}
Let $X$ be a $\mathbb{Q}$-factorial projective toric $n$-fold. 
Let $\varphi_R:X\to W$ be a Fano contraction associated to a $K_X$-negative 
extremal ray $R\subset\NE(X)$ such that 
the dimension of a fiber of $\varphi_R$ is $d$, equivalently, 
$d=\dim X-\dim W$. 
If $l(R)>d$, then $\varphi_R$ is 
a $\mathbb{P}^d$-bundle over $W$. 
\end{thm}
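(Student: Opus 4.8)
The plan is to determine a general fiber first, conclude it is $\mathbb P^d$ from the length hypothesis, and then to globalize this into a bundle combinatorially. Since $\varphi_R$ is a toric contraction of the single extremal ray $R$, it is induced by a surjection of lattices $\pi\colon N\to N_W$ whose kernel $N_F$ has rank $d$, and it carries the fan $\Sigma$ of $X$ onto the fan $\Sigma_W$ of $W$. A general fiber $F$ is then the $\mathbb Q$-factorial projective toric $d$-fold defined by the subfan of $\Sigma$ supported in $(N_F)_{\mathbb R}$, and $\rho(F)=1$ because $\rho(X/W)=1$; hence $\NE(F)$ is a single ray $R_F$. My first step is to see that $R_F$ has the same length as $R$. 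For a torus invariant curve $C$ contained in $F$, the boundary divisors $D_\rho$ with $\rho\not\subset(N_F)_{\mathbb R}$ are pulled back from $W$ and so satisfy $D_\rho\cdot C=0$, whereas the vertical boundary divisors restrict to the torus invariant divisors of $F$; therefore $-K_X\cdot C=-K_F\cdot C$. Minimizing over $C$ gives $l(R_F)=l(R)>d=\dim F$, and Theorem \ref{rho1}~(1) yields $F\simeq\mathbb P^d$.

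Knowing that a general fiber is $\mathbb P^d$ is not yet enough: I must promote $\varphi_R$ to a genuine $\mathbb P^d$-bundle. Combinatorially, $F\simeq\mathbb P^d$ says that the vertical rays of $\Sigma$, namely those lying in $(N_F)_{\mathbb R}$, are exactly $d+1$ primitive vectors $v_0,\dots,v_d$ subject to the single relation $v_0+\dots+v_d=0$. The goal is to show that every maximal cone $\sigma\in\Sigma$ contains precisely $d$ of the $v_i$, meets $(N_F)_{\mathbb R}$ in the face they span, and has its remaining generators forming a lift over the rays of the corresponding maximal cone of $\Sigma_W$. Once this is in place, $\Sigma$ is identified with the standard fan of a toric projective space bundle $\mathbb P(\mathcal O_W\oplus\bigoplus_i\mathcal O_W(D_i))$ and $\varphi_R$ becomes its projection, which is the assertion.

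The heart of the matter, and the only place where the strict inequality $l(R)>d$ is truly needed, is ruling out jumping fibers, that is, showing that no maximal cone of $\Sigma$ contains fewer than $d$ vertical rays. I expect this to work as follows: over a torus fixed point $p\in W$ at which $\dim\varphi_R^{-1}(p)>d$, a wall of $\Sigma$ lying over $p$ should produce a torus invariant rational curve $C'\in R$ with $-K_X\cdot C'\le d$, contradicting $l(R)>d$; morally this is the fiberwise manifestation of the gap in Theorem \ref{rho1}, namely that no $\rho=1$ toric $d$-fold has length strictly between $d$ and $d+1$. Turning this heuristic into a precise intersection estimate at a degenerate fiber, via the explicit wall relations and the multiplicity formula of toric Mori theory, is the step I anticipate to be the real obstacle; it is also exactly the content that the sharper Theorem \ref{maintheorem} is designed to furnish, after which the equidimensionality and the splitting of the previous paragraph follow and complete the proof.
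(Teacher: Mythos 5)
Your first paragraph is correct and coincides with the paper's own reduction (Corollary \ref{maincor}): adjunction along a general fiber $F$ gives $-K_F\cdot C=-K_X\cdot C>d$ for every curve $C\subset F$, and Theorem \ref{rho1}~(1) then yields $F\simeq\mathbb{P}^d$. The problem is the second half, where you misidentify the actual difficulty. Ruling out jumping fibers is \emph{not} where $l(R)>d$ is needed: for a toric contraction of an extremal ray of fiber type, equidimensionality holds with no length hypothesis whatsoever. The structure theory of toric extremal contractions (see the description in Section 2 of the paper and Remark \ref{fanofiber}) already forces every maximal cone of $\Sigma$ to consist of exactly $d$ of the vertical generators $v_1,\dots,v_{d+1}$ together with $n-d$ horizontal generators lying over a maximal cone of $\Sigma_W$, and every fiber with its reduced structure is isomorphic to the general fiber. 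So the step you call ``the heart of the matter'' is settled vacuously, and the contradiction curve $C'$ you hope to produce never exists.

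Worse, even granting that step, your concluding inference fails: knowing that each maximal cone contains exactly $d$ vertical rays and lifts a maximal cone of $\Sigma_W$ does \emph{not} identify $\Sigma$ with the fan of a projective space bundle. Example \ref{primbutnotbdl} of the paper satisfies everything in your second paragraph --- each maximal cone has exactly $d$ vertical generators, the horizontal generators $y_j$ lift the primitive generators of the image cone, and every reduced fiber is $\mathbb{P}^d$ --- yet the contraction is not a $\mathbb{P}^d$-bundle, and its length is only $\frac{d+1}{2}$. The genuine obstruction is lattice-theoretic, a multiple-fiber phenomenon rather than a jumping-fiber one: the projection $V_\sigma\cap N\to\mathbb{Z}^{n-d}$, where $V_\sigma$ is the real span of a horizontal cone $\sigma$, can fail to be surjective even when all reduced fibers are $\mathbb{P}^d$. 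This is exactly where the length enters in the paper's proof of Theorem \ref{maintheorem}: for a wall curve $C$ one has $-K_X\cdot C=(d+1)D_r\cdot C$ with $D_r\cdot C=\alpha/\beta$ and $\alpha\mid\beta$ (Proposition \ref{kotensu}), so $D_r\cdot C<1$ would force $D_r\cdot C\le\frac{1}{2}$, i.e.\ $-K_X\cdot C\le\frac{d+1}{2}$, contradicting $l(R)>d\ge\frac{d+1}{2}$; the resulting equality $\alpha=\beta$, fed into an elementary-divisor argument, gives the surjectivity and hence local triviality. Your proposal contains none of this, and the heuristic you substitute (an intersection estimate at a degenerate fiber) cannot supply it, since there are no degenerate fibers. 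Note finally that Theorem \ref{maintheorem}, quoted as actually stated, needs no equidimensionality input at all: combined with your first paragraph and the inequality $d\ge\frac{d+1}{2}$ it finishes the proof outright, which is precisely the paper's Corollary \ref{maincor}.
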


We show that this result is sharp by Examples 
\ref{primbutnotbdl} and 
\ref{wtproj}. 
We note that Theorem \ref{intromain} is 
nothing but Theorem \ref{rho1} (1) if $\dim W=0$. 
Therefore, we can see Theorem \ref{intromain} as a 
generalization of Theorem \ref{rho1} (1). 

\begin{ack}
The authors would like to 
thank the referee for useful comments. 
\end{ack}

\section{Preliminaries}
In this section, we introduce some basic results and notation of 
the toric geometry in order to prove the main theorem. 
For the details, please see \cite{cls}, 
\cite{fulton} and  \cite{oda}.
See also \cite{fujino-sato}, \cite[Chapter 14]{matsuki} 
and \cite{reid} for the toric Mori theory.

\medskip

Let $X=X_\Sigma$ be the toric $n$-fold associated to a fan 
$\Sigma$ in $N=\mathbb{Z}^n$ 
over an algebraically closed field $k$ of arbitrary characteristic. 
We will use the notation $\Sigma=\Sigma_X$ to denote the fan 
associated to a 
toric variety $X$. 
It is well known that there exists a one-to-one correspondence between 
the $r$-dimensional cones in $\Sigma$ and the torus invariant 
subvarieties of dimension $n-r$ in $X$. Let $\G(\Sigma)$ 
be the set of primitive generators for $1$-dimensional cones in $\Sigma$. 
Thus, for $v\in\G(\Sigma)$, 
we have a torus invariant prime divisor corresponding to $v$. 

\medskip

For an $r$-dimensional simplicial cone $\sigma\in\Sigma$, 
let $N_\sigma\subset N$ be the sublattice generated by $\sigma\cap N$ 
and let $\sigma\cap\G(\Sigma)=\{v_1,\ldots,v_r\}$, that is, 
$\sigma=\langle v_1,\ldots,v_r\rangle$, where 
$\langle v_1,\ldots,v_r\rangle$ is the $r$-dimensional 
strongly convex cone generated by $\{v_1,\ldots,v_r\}$. 
Put 
\[
\mult(\sigma):=[N_\sigma:\mathbb{Z}v_1+\cdots+\mathbb{Z}v_r]
\]
which is the index of the subgroup $\mathbb{Z}v_1+\cdots+\mathbb{Z}v_r$ 
in $N_\sigma$. 
The following property is fundamental. 

\begin{prop}\label{kotensu}
Let $X$ be a $\mathbb{Q}$-factorial toric $n$-fold, and let $\tau\in\Sigma$ 
be an $(n-1)$-dimensional cone and $v\in\G(\Sigma)$. If 
$v$ and $\tau$ generate a maximal cone $\sigma$ in $\Sigma$, then 
\[
D\cdot C=\frac{\mult(\tau)}{\mult(\sigma)},
\]
where $D$ is the torus invariant prime divisor corresponding to $v$, while 
$C$ is the torus invariant curve corresponding to $\tau$.
\end{prop}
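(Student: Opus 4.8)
The plan is to reduce the global number $D\cdot C$ to a purely local intersection multiplicity at the torus-fixed point of $C$ corresponding to $\sigma$, and then to read off the answer from an index computation in the quotient lattice $N/N_\tau$. First I would record the geometry of $C=V(\tau)$. Since $\tau$ is $(n-1)$-dimensional, $C$ is a complete torus invariant curve, hence $C\simeq\mathbb{P}^1$, and $\tau$ is a wall lying in exactly two maximal cones $\sigma$ and $\sigma'$ whose associated torus-fixed points $p_\sigma,p_{\sigma'}$ are the two points of $C$. Writing $\tau=\langle v_1,\ldots,v_{n-1}\rangle$ and $\sigma=\langle v_1,\ldots,v_{n-1},v\rangle$, the ray $\langle v\rangle$ is a face of $\sigma$ but not of $\sigma'$ (otherwise $\sigma'=\sigma$), so $D=D_v$ passes through $p_\sigma$ but not $p_{\sigma'}$; moreover $D\not\supseteq C$ because $\langle v\rangle$ is not a face of $\tau$. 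Since $D$ is torus invariant it meets $C$ only at $p_\sigma$, and therefore $D\cdot C$ equals the local intersection multiplicity there, which may be computed entirely inside the affine chart $U_\sigma$.

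The heart of the argument is an index computation. Let $\pi_\tau\colon N\to N(\tau):=N/N_\tau$ be the projection; since $\tau$ is $(n-1)$-dimensional, $N(\tau)\cong\mathbb{Z}$ and $\pi_\tau(v_i)=0$ for every $i$. Let $u_+$ be the primitive generator of $N(\tau)$ lying on the side of $\pi_\tau(v)$. I claim that
\[
\pi_\tau(v)=\frac{\mult(\sigma)}{\mult(\tau)}\,u_+ .
\]
To prove this, put $L=\mathbb{Z}v_1+\cdots+\mathbb{Z}v_{n-1}$ and $L'=L+\mathbb{Z}v$, and factor $[N:L']=\mult(\sigma)$ through the intermediate subgroup $N_\tau+\mathbb{Z}v$. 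Because $v\notin\operatorname{span}(\tau)$ we have $\mathbb{Z}v\cap N_\tau=0$, which gives $[N_\tau+\mathbb{Z}v:L']=[N_\tau:L]=\mult(\tau)$ and $[N:N_\tau+\mathbb{Z}v]=[N(\tau):\mathbb{Z}\pi_\tau(v)]$. Multiplying these two indices yields $\mult(\sigma)=\mult(\tau)\cdot[N(\tau):\mathbb{Z}\pi_\tau(v)]$, which is exactly the claim.

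Finally I would assemble the pieces. As $X$ is $\mathbb{Q}$-factorial, $D$ is $\mathbb{Q}$-Cartier, and by simpliciality of $\sigma$ there is a unique $m_\sigma\in M_\mathbb{Q}$ with $\langle m_\sigma,v\rangle=1$ and $\langle m_\sigma,v_i\rangle=0$ for all $i$; note $m_\sigma\in\tau^\perp=M(\tau)_\mathbb{Q}$, and on $U_\sigma$ the divisor $D$ is cut out by $\chi^{m_\sigma}$. Choosing $\ell$ with $\ell m_\sigma\in M$, the Cartier divisor $\ell D$ restricts on $C\cap U_\sigma\cong\mathbb{A}^1$ to the function $\chi^{\ell m_\sigma}$, whose order of vanishing at $p_\sigma$ is $\langle \ell m_\sigma,u_+\rangle$. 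Using $u_+=(\mult(\tau)/\mult(\sigma))\,\pi_\tau(v)$ together with $\langle m_\sigma,\pi_\tau(v)\rangle=\langle m_\sigma,v\rangle=1$, this order equals $\ell\,\mult(\tau)/\mult(\sigma)$; dividing by $\ell$ gives $D\cdot C=\mult(\tau)/\mult(\sigma)$.

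I expect the main obstacle to be the bookkeeping in the last step: correctly identifying $D$ on $U_\sigma$ with the character $m_\sigma$, passing to an integral multiple in order to make sense of an order of vanishing in the possibly singular chart $U_\sigma$, and verifying that this order along $C$ is precisely $\langle m_\sigma,u_+\rangle$ for the \emph{primitive} generator $u_+$. The lattice index computation itself is clean, but pinning down the reciprocal relationship between $[N(\tau):\mathbb{Z}\pi_\tau(v)]$ and the fractional intersection number requires care with the quotient-singularity structure of $U_\sigma$.
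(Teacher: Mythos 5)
The paper states Proposition \ref{kotensu} without proof, treating it as a standard fact from the toric literature (cf.\ \cite{reid}, \cite{cls}), and your argument is correct and is essentially that standard proof: reduce to the chart $U_\sigma$, compute $[N/N_\tau:\mathbb{Z}\pi_\tau(v)]=\mult(\sigma)/\mult(\tau)$ via the chain $L'\subset N_\tau+\mathbb{Z}v\subset N$, and pair the local $\mathbb{Q}$-Cartier datum $m_\sigma$ against the primitive generator of $N/N_\tau$. The only point worth noting is that you implicitly assume $\tau$ is a wall of two maximal cones so that $C$ is complete and $D\cdot C$ is defined, which is indeed the intended reading of the statement.
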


Let $X$ be a projective toric variety. 
We put 
\[
\mathrm{Z}_1(X):=\{1\text{-cycles of} \ X\}, 
\]
and 
\[
\mathrm{Z}_1(X)_{\mathbb R}:=
\mathrm{Z}_1(X)\otimes \mathbb R.  
\]
Let 
\[
\Pic (X)\times \mathrm{Z}_1(X)
\to \mathbb Z
\] 
be a pairing 
defined by 
$(\mathcal L, C)\mapsto \deg _C\mathcal L$. 
By extending it 
by bilinearity, we have a pairing 
\[
(\Pic (X)\otimes \mathbb R)\times \mathrm{Z}_1(X)_{\mathbb R}
\to \mathbb R. 
\] 
We define 
\[
\mathrm{N}^1(X):=(\Pic (X)\otimes \mathbb R)/\equiv
\]
and 
\[
\mathrm{N}_1(X):= \mathrm{Z}_1(X)_{\mathbb R}/\equiv, 
\] 
where the {\em numerical equivalence} $\equiv$ 
is by definition the smallest equivalence relation which 
makes $\mathrm{N}^1$ and $\mathrm{N}_1$ into dual 
spaces. 

Inside $\mathrm{N}_1(X)$ there is a distinguished cone of effective 
$1$-cycles of $X$, 
\[
{\NE}(X)=\left\{\, Z\, \left| \ Z\equiv 
\sum a_iC_i \ \text{with}\ a_i\in \mathbb R_{\geq 0}\right.\right\}
\subset \mathrm{N}_1(X), 
\] 
which is usually called the {\em{Kleiman--Mori cone}} of $X$. 
It is known that $\NE(X)$ is a rational polyhedral cone. 
A face $F\subset {\NE}(X)$ is called an {\em{extremal face}} 
in this case. 
A one-dimensional extremal face is called an {\em{extremal 
ray}}. 


\medskip

Next, we introduce a combinatorial description of toric Fano 
contractions which are main objects of this paper. 
Let $X=X_\Sigma$ be a $\mathbb{Q}$-factorial projective 
toric $n$-fold and 
$\varphi_R:X\to W$ be the extremal contraction 
associated to an extremal ray $R\subset\NE(X)$ of fiber type. 
Put 
\[
d:=\dim X-\dim W. 
\] 
Up to automorphisms of $N$, 
$\Sigma$ is constructed as follows:

For the standard basis $\{e_1,\ldots,e_{n}\}\subset N=\mathbb{Z}^n$, 
put $N':=\mathbb{Z}e_1+\cdots+\mathbb{Z}e_d$, while 
$N'':=\mathbb{Z}e_{d+1}+\cdots+\mathbb{Z}e_{n}$, that is, 
$N=N'\oplus N''$. 
Then, there exist $\{v_1,\ldots,v_{d+1}\}\subset\G(\Sigma)\cap N'$ 
such that  $\{v_1,\ldots,v_{d+1}\}\setminus\{v_i\}$ 
generates a $d$-dimensional cone $\sigma_i\in\Sigma$ for any $1\le i\le d+1$, 
and $\sigma_1\cup\cdots\cup\sigma_{d+1}=N'\otimes\mathbb{R}$. 
Namely, we obtain the complete fan $\Sigma_F$ 
in $N'$ whose maximal cones are 
$\sigma_1,\ldots,\sigma_{d+1}$. $\Sigma_F$ is associated to a 
general fiber $F$ of $\varphi_R$, and the Picard number $\rho(F)$ is $1$. 
Moreover, for any $\{y_1,\ldots,y_{n-d}\}\subset\G(\Sigma)
\setminus\{v_1,\ldots,v_{d+1}\}$ which generates 
an $(n-d)$-dimensional cone in $\Sigma$, 
$\{v_1,\ldots,v_{d+1},y_1,\ldots,y_{n-d}\}\setminus\{v_i\}$ 
generates a maximal cone in $\Sigma$ for any $1\le i\le d+1$. 
Thus, the projection $N=N'\oplus N''\to N''$ induces $\varphi_R$. 

\begin{rem}\label{fanofiber}
This description shows that for a toric Fano contraction $\varphi_R:X\to W$, 
the dimension of any fiber is constant. 
As we saw above, the general fiber $F$ of $\varphi_R$ is a 
projective $\mathbb Q$-factorial toric variety of Picard number 
$\rho(F)=1$. 
Moreover, it is known that the fiber 
$\varphi_R^{-1}(w)_{\mathrm{red}}$ 
with the reduced structure is 
isomorphic to $F$ for every closed point $w\in W$ 
(see \cite[Proposition 15.4.5]{cls} and 
\cite[Corollary 14-2-2]{matsuki}).
\end{rem}
 
\section{Fano contractions}\label{f-sec3}

The following result is the main theorem of this paper.  

\begin{thm}\label{maintheorem}
Let $X=X_\Sigma$ be a $\mathbb{Q}$-factorial projective toric $n$-fold. 
Let $\varphi_R:X\to W$ be a Fano contraction associated to a $K_X$-negative 
extremal ray $R\subset\NE(X)$, and $d=n-\dim W$ 
be the dimension of a fiber of 
$\varphi_R$. 
If a general fiber of $\varphi_R$ is isomorphic to $\mathbb{P}^d$ and 
\[
-K_X\cdot C>\frac{d+1}{2}
\]
holds 
for any curve $C$ on $X$ contracted by $\varphi_R$, 
then $\varphi_R$ is 
a $\mathbb{P}^d$-bundle over $W$. 
\end{thm}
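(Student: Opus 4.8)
The plan is to reduce everything to a computation of $-K_X\cdot C$ for the finitely many torus invariant curves $C$ contracted by $\varphi_R$, and to read off the bundle structure from the numerical value. First I would fix the combinatorial picture from the Preliminaries: write $N=N'\oplus N''$, let $v_1,\dots,v_{d+1}\in\G(\Sigma)\cap N'$ be the fiber rays, and let the remaining generators (the ``base rays'') each have the form $w=(a_w,\,c_w u_w)$ with $u_w\in N''$ a primitive generator of a ray of $\Sigma_W$, $c_w\in\mathbb Z_{\ge1}$, and $a_w\in N'$. Since a general fiber is $\mathbb P^d$, the fan $\Sigma_F$ is the fan of $\mathbb P^d$, so after an automorphism of $N'$ we may take $v_i=e_i$ for $i\le d$ and $v_{d+1}=-(e_1+\dots+e_d)$; in particular any $d$ of the $v_i$ form a $\mathbb Z$-basis of $N'$ and $\sum_{i=1}^{d+1}v_i=0$. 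The first consequence I would record is that for every maximal base cone $\delta=\langle u_1,\dots,u_m\rangle$ with lifts $w_1,\dots,w_m$, the maximal cones $\sigma_i=\langle\{v_k:k\ne i\}\cup\{w_1,\dots,w_m\}\rangle$ all satisfy $\mult(\sigma_i)=(\prod_l c_l)\,\mult(\delta)$, independently of $i$, because the fiber block contributes determinant $\pm1$.

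Next I would identify the contracted curves. A torus invariant curve $C_\tau$ is contracted exactly when its wall $\tau$ and the two adjacent maximal cones map to one and the same cone of $\Sigma_W$; this forces $\tau=\tau_{ii'}=\langle\{v_k:k\ne i,i'\}\cup\{w_1,\dots,w_m\}\rangle$ over a maximal base cone, so the only contracted torus invariant curves are these fiber lines, and every contracted curve is a positive integral multiple of one of them. Using Proposition \ref{kotensu} together with the primitive relation $\sum_i v_i=0$ (in which every $v_i$ has coefficient $1$ and every $w_l$ has coefficient $0$), each fiber ray contributes $D_{v_i}\cdot C=\mult(\tau_{ii'})/\mult(\sigma_i)$ while each base ray contributes $0$, whence
\[
-K_X\cdot C_{\tau_{ii'}}=(d+1)\,\frac{\mult(\tau_{ii'})}{\mult(\sigma_i)}.
\]
Because $C_{\tau_{ii'}}$ is an integral multiple of the primitive generator of $R\cap\mathrm N_1(X)_{\mathbb Z}$ while the general fiber line has $-K_X$-degree $d+1$, this ratio is of the form $1/\mu$ for a positive integer $\mu=\mu_{ii'}$.

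Now I would feed in the length hypothesis. For every contracted $C$ we have $-K_X\cdot C>(d+1)/2$, that is $(d+1)/\mu>(d+1)/2$, forcing $\mu=1$, i.e.\ $\mult(\tau_{ii'})=\mult(\sigma_i)$ for every pair $i,i'$ and every maximal base cone. I would then show this numerical equality is equivalent to the split bundle form of $\Sigma$. A direct index computation (projecting the lattice $N_{\tau_{ii'}}$ to $N''$) evaluates $\mult(\tau_{ii'})/\mult(\sigma_i)$ as a $\gcd$-over-$c$ type quantity built from the $c_l$ and the residues of the $a_l$; the equality $\mu=1$ forces first every $c_l=1$ (no multiple fibers, since otherwise primitivity of $w_l$ produces a pair with $\mu\ge2$), and then that the twisting data encoded by the $a_l$ is Cartier on each affine chart $U_\delta$ (otherwise one again gets $\mu\ge2$, as the rank-$1$ example over an $A_1$ base shows). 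The sharp Examples \ref{primbutnotbdl} and \ref{wtproj} sit exactly at $\mu=2$.

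Finally, with all $c_l=1$ and the twisting Cartier, I would shear by an automorphism of $N$ preserving $N''$ to bring $\Sigma$ into the standard fan of $\mathbb P(\mathcal O_W\oplus\mathcal O_W(D_1)\oplus\dots\oplus\mathcal O_W(D_d))$; since every affine toric chart has trivial Picard group, this exhibits $\varphi_R$ as a $\mathbb P^d$-bundle. I expect the main obstacle to be precisely the step turning the numerical equality $\mult(\tau)=\mult(\sigma)$ into the bundle structure, i.e.\ proving that $\mu=1$ for all contracted curves is not only necessary but sufficient for local triviality; the careful bookkeeping of the lattice indices $\mult(\tau_{ii'})$ over a possibly singular base $W$ is where the real work lies, whereas the forcing of $\mu=1$ from the length bound is immediate once the degree formula is in place.
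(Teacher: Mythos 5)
Your first half tracks the paper's proof closely: you set up the same combinatorial model, reduce to the torus invariant contracted curves (the walls obtained by deleting two fiber rays from a maximal cone), derive $-K_X\cdot C=(d+1)\,\mult(\tau)/\mult(\sigma)$ from Proposition \ref{kotensu} and the relation $\sum_i v_i=0$, and use the divisibility $\mult(\tau)\mid\mult(\sigma)$ to conclude that the length bound $-K_X\cdot C>\tfrac{d+1}{2}$ forces $\mult(\tau)=\mult(\sigma)$ for every such wall. All of this is correct and is exactly what the paper does (the paper only needs the $d$ walls missing $v_r$ and $v_{d+1}$, $1\le r\le d$, over each maximal base cone).

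The genuine gap is the step you yourself flag as ``where the real work lies'': converting the numerical equality $\mult(\tau)=\mult(\sigma)$ into local triviality. Your plan (show all $c_l=1$, then show the twisting data is ``Cartier'' on each chart, then shear into a standard projectivized-bundle fan) is only a sketch; neither the claimed $\gcd$ formula for $\mult(\tau)/\mult(\sigma)$ nor the implication ``$\mu=1$ for all walls $\Rightarrow$ $c_l=1$ and Cartier twisting'' is established, and a single global shear cannot in general trivialize all charts simultaneously, so the argument must be run chart by chart. The paper closes this gap with one clean linear-algebra observation: local triviality over the chart of a maximal base cone $\delta=p(\langle y_1,\dots,y_{n-d}\rangle)$ is equivalent to surjectivity of $V_\sigma\cap\mathbb{Z}^n\to\mathbb{Z}^{n-d}$, which amounts to the integrality of $c_1b_{r,1}+\cdots+c_{n-d}b_{r,n-d}$ where $(c_j)=A^{-1}(z_j)$; meanwhile $\mult(\tau_r)$ is computed (via elementary divisors) as the gcd of the maximal minors of the $(n-d+1)\times(n-d)$ matrix $\overline{A}$ obtained by adjoining the row $(b_{r,1},\dots,b_{r,n-d})$ to $A$, so $\mult(\tau_r)=|\det A|=\mult(\sigma)$ forces every such minor to be divisible by $\det A$, and the cofactor expansion of $A^{-1}$ then gives exactly the required integrality. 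This identification of $\mult(\tau_r)$ with a gcd of minors of $\overline{A}$, and its direct link to the lattice splitting, is the missing idea; without it (or a worked-out substitute along the lines you propose) the proof is incomplete.
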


\begin{proof}
We may assume that $\varphi_R:X\to W$ is induced by the following projection:
\[
\begin{array}{ccc}
N=\mathbb{Z}^n & \stackrel{p}{\longrightarrow} & \mathbb{Z}^{n-d} \\
\rotatebox{90}{$\in$} & & \rotatebox{90}{$\in$} \\
(x_1,\ldots,x_n) & \longmapsto & (x_{d+1},\ldots,x_n).
\end{array}
\]
Let $\{e_1,\ldots,e_n\}$ be the standard basis for $N=\mathbb{Z}^n$. 
We put $$v_1:=e_1,\quad \ldots,\quad v_d:=e_d,\quad \text{and}\quad 
v_{d+1}:=-(e_1+\cdots+e_d). $$
Then $\Sigma$ contains the $d$-dimensional 
subfan $\Sigma_F$ corresponding to a general fiber $F\simeq \mathbb{P}^d$ 
whose maximal cones are 
\[
\left\langle\left\{v_1,\ldots,v_{d+1}\right\}
\setminus\{v_i\}\right\rangle\quad (1\le i\le d+1).
\]

Let $V_\sigma\subset N\otimes_\mathbb{Z}\mathbb{R}$ 
be the linear subspace spanned by $\sigma$ 
for any $(n-d)$-dimensional 
cone $\sigma$ in $\Sigma$ such that $\left(\sigma\cap\G(\Sigma)\right)\cap
\{v_1,\ldots,v_{d+1}\}=\emptyset$.  
Then it is sufficient to show that 
\begin{equation}\label{eq1}
V_\sigma\cap\mathbb{Z}^{n}\stackrel{p}{\longrightarrow}\mathbb{Z}^{n-d}
\end{equation}
is bijective. 
This is because the restriction of $\varphi_R:X\to W$ to the 
affine toric open subset $U$ corresponding to an $(n-d)$-dimensional 
cone $p(\sigma)$ is the 
second projection $\mathbb P^d\times U\to U$ if $p$ in 
\eqref{eq1} is bijective. 
The injectivity of \eqref{eq1} 
is trivial. Therefore, we will show the surjectivity of \eqref{eq1}. 

Let $y_1,\ldots,y_{n-d}\in\G(\Sigma)\setminus\{v_1,\ldots,v_{d+1}\}$ be 
the primitive generators for any $(n-d)$-dimensional cone in $\Sigma$ 
such that $p(\langle y_1,\ldots,y_{n-d}\rangle)$ is also $(n-d)$-dimensional. 
Put 
\[
\begin{split}
y_1&=(b_{1,1},\ldots,b_{d,1},a_{1,1},\ldots,a_{n-d,1}),
\\ 
&\vdots\\
y_{n-d}&=(b_{1,n-d},\ldots,b_{d,n-d},a_{1,n-d},\ldots,a_{n-d,n-d}).
\end{split}
\]
For any $(z_1,\ldots,z_{n-d})\in\mathbb{Z}^{n-d}$, 
we can take $(c_1,\ldots,c_{n-d})\in\mathbb{R}^{n-d}$ 
satisfying  
\[
p(c_1y_1+\cdots+c_{n-d}y_{n-d})=
c_1p(y_1)+\cdots+c_{n-d}p(y_{n-d})=
(z_1,\ldots,z_{n-d}).
\]
We note that the matrix
\[
  A: = \left(
    \begin{array}{ccc}
      a_{1,1}  & \ldots & a_{1,n-d} \\
      \vdots  & \ddots & \vdots \\
      a_{n-d,1}  & \ldots & a_{n-d,n-d}
    \end{array}
  \right)
\]
is regular as a real matrix because $p(y_1),\ldots,p(y_{n-d})$ 
generates an $(n-d)$-dimensional 
cone. 
Therefore, $(c_1,\ldots,c_{n-d})$ is uniquely determined 
by 
\[
  \left(
    \begin{array}{c}
      c_1  \\
      \vdots   \\
      c_{n-d}  
    \end{array}
  \right)
  =A^{-1}
    \left(
    \begin{array}{c}
      z_1  \\
      \vdots   \\
      z_{n-d}  
    \end{array}
  \right)
  \in\mathbb{Q}^{n-d}.
\]
Thus, all we have to do is to show that 
\[
c_1b_{r,1}+\cdots+c_{n-d}b_{r,n-d}\in\mathbb{Z}
\]
for any $1\le r\le d$. 

By considering the principal Cartier divisors of the dual basis of 
$\{e_1,\ldots,e_n\}$, 
we obtain the relations 
\begin{eqnarray}\label{rationalfunc}
  \left\{
    \begin{array}{rcl}
 D_1-D_{d+1}+b_{1,1}E_1+\cdots+b_{1,n-d}E_{n-d}+H_1 & = & 0, \\
     & \vdots & \\
 D_d-D_{d+1}+b_{d,1}E_1+\cdots+b_{d,n-d}E_{n-d}+H_d & = & 0, \\
 a_{1,1}E_1+\cdots+a_{1,n-d}E_{n-d}+H_{d+1} & = & 0, \\
 & \vdots & \\
 a_{n-d,1}E_1+\cdots+a_{n-d,n-d}E_{n-d}+H_{n} & =& 0
    \end{array}
  \right.
\end{eqnarray}
in $\mathrm{N}^1(X)$, 
where $D_1,\ldots,D_{d+1},E_1,\ldots,E_{n-d}$ are the torus invariant 
prime divisors corresponding to 
$v_1,\ldots,v_{d+1},y_1,\ldots,y_{n-d}$, respectively, and $H_1,\ldots,H_n$ are 
some linear combinations of torus invariant prime divisors other 
than $D_1,\ldots,D_{d+1},E_1,\ldots,E_{n-d}$. 
Let $C=C_{r}$ $(1\le r\le d)$ be the torus invariant curve corresponding to 
the $(n-1)$-dimensional cone 
\[
\left\langle \left\{v_1,\ldots,v_{d},y_1,\ldots,y_{n-d}\right\}
\setminus\{v_r\}\right\rangle.
\]
Since $H_i\cdot C=0$ for any $1\le i\le n$, we may ignore $H_1,\ldots,H_n$ 
in the following calculation. 
Since the matrix $A$ is regular, 
we have 
\[
E_1\cdot C=\cdots=E_{n-d}\cdot C=0, 
\]
and 
\[
D_1\cdot C=D_2\cdot C=\cdots=D_{d+1}\cdot C
\]
by the above equalities \eqref{rationalfunc} in $N^1(X)$. 
Thus, we obtain 
\[
-K_X\cdot C=(d+1)D_i\cdot C
\]
for any $1\le i\le d+1$. 

Put 
\[
\alpha:=\mult\left(\left\langle\left\{v_1,\dots,v_{d},y_1,\ldots,y_{n-d}\right\}
\setminus\{v_r\}\right\rangle\right)
\]
and 
\[
\beta:=\mult\left(\left\langle\left\{v_1,\dots,v_{d},y_1,\ldots,y_{n-d}\right\}\right\rangle\right). 
\]
Then we get  
\[
D_r\cdot C=\frac{\alpha}{\beta}
\]
by Proposition \ref{kotensu}. We note that 
$\alpha\mid\beta$ always holds. 
Obviously, $\beta=|\det A|$. 
On the other hand, $\alpha$ is the product of 
the elementary divisors of the $n\times (n-1)$ matrix
\[
\left(
{}^{\rm t}v_1,
\ldots,
\stackrel{\vee}{{}^{\rm t}v_r},
\ldots,
{}^{\rm t}v_d,
{}^{\rm t}y_1,
\ldots,
{}^{\rm t}y_{n-d}
\right)
=
\left(
    \begin{array}{ccccccccc}
 1 & & &  & & &  b_{1,1} & \ldots & b_{1,n-d} \\
 &  \ddots & &  & \text{{\huge{0}}} &  &   \vdots  & \ddots & \vdots \\
 & & 1 & & & &   b_{r-1,1} & \ldots & b_{r-1,n-d} \\
  0 & \cdots & 0 & 0 & \cdots & 0 &  b_{r,1} & \ldots & b_{r,n-d} \\
  & & & 1 & & & b_{r+1,1} & \ldots & b_{r+1,n-d} \\
    & & & & \ddots & &   \vdots  & \ddots & \vdots \\
  & & & & & 1 &   b_{d,1} & \ldots & b_{d,n-d} \\
  & & & & & &   a_{1,1}  & \ldots & a_{1,n-d} \\
  & \text{{\huge{0}}} &  & & & &    \vdots  & \ddots & \vdots \\
  & & & & & &    a_{n-d,1}  & \ldots & a_{n-d,n-d}
    \end{array}
  \right),
\]
where ${}^{\rm t}v$ stands 
for the transpose of $v$. 
By interchanging rows of this matrix, 
one can easily check that $\alpha$ is also the product of 
the elementary divisors of the $(n-d+1)\times (n-d)$ matrix
\[
  \overline{A} = \left(
    \begin{array}{ccc}
    b_{r,1} & \ldots & b_{r,n-d} \\
      a_{1,1}  & \ldots & a_{1,n-d} \\
      \vdots  & \ddots & \vdots \\
      a_{n-d,1}  & \ldots & a_{n-d,n-d}
    \end{array}
  \right).
\]


Suppose that $D_r\cdot C<1$ holds. 
Then, more strongly, we obtain the inequality $D_r\cdot C\le \frac{1}{2}$ 
by the relation $\alpha\mid\beta$. 
Thus, the following inequality 
\[
 -K_X\cdot C=(d+1)D_r\cdot C\le \frac{d+1}{2}
\] 
holds.  
However, this contradicts the assumption that 
$\frac{d+1}{2}< -K_X\cdot C$. Therefore, the equality 
$$\frac{\alpha}{\beta}=D_r\cdot C=1$$ must always hold. 
Since the general theory of elementary divisors says that 
$\alpha$ is the greatest common divisor of the 
$(n-d)\times(n-d)$ minor determinants of $\overline{A}$, 
the $(n-d)\times(n-d)$ determinant
\[
  \left|
    \begin{array}{ccc}
    b_{r,1}  & \ldots & b_{r,n-d} \\
      a_{1,1}  & \ldots & a_{1,n-d} \\
      \vdots  & \vdots & \vdots \\
      a_{i-1,1}  & \ldots & a_{i-1,n-d} \\
      a_{i+1,1}  & \ldots & a_{i+1,n-d} \\
          \vdots  & \vdots & \vdots \\
      a_{n-d,1}  & \ldots & a_{n-d,n-d}
    \end{array}
  \right|
\]
is divisible by $\det A$ for any $1\le i\le n-d$. 
Let 
\[
 \widetilde{A}: = \left(
    \begin{array}{ccc}
      \widetilde{a}_{1,1}  & \ldots &  \widetilde{a}_{1,n-d} \\
      \vdots  & \ddots & \vdots \\
       \widetilde{a}_{n-d,1}  & \ldots &  \widetilde{a}_{n-d,n-d}
    \end{array}
  \right)
\]
be the cofactor matrix of $A$. Then, 
\[
c_1b_{r,1}+\cdots+c_{n-d}b_{r,n-d}
\]
\[
=
\frac{1}{\det A}
\left(\widetilde{a}_{1,1}z_1+\cdots+\widetilde{a}_{1,n-d}z_{n-d}\right)b_{r,1}
+\cdots+
\frac{1}{\det A}
\left(\widetilde{a}_{n-d,1}z_1+\cdots+\widetilde{a}_{n-d,n-d}z_{n-d}\right)b_{r,n-d}
\]
\[
=
\frac{
\widetilde{a}_{1,1}b_{r,1}+\cdots+\widetilde{a}_{n-d,1}b_{r,n-d}}
{\det A}\times
z_1
+\cdots+
\frac{
\widetilde{a}_{1, n-d}b_{r,1}+\cdots+\widetilde{a}_{n-d,n-d}b_{r,n-d}}
{\det A}\times
z_{n-d}
\]
is an integer. This completes the proof.
\end{proof}

The following example shows that Theorem \ref{maintheorem} is sharp. 

\begin{ex}\label{primbutnotbdl}
Let $\{e_1,\ldots,e_n\}$ be the standard basis for $N=\mathbb{Z}^n$ 
and $p:N\to\mathbb{Z}^{n-d}$ be the projection
\[
(x_1,\ldots,x_d,x_{d+1},\ldots,x_n)\mapsto (x_{d+1},\ldots,x_n)
\]
for $1\le d< n$. 
Put 
\[
v_1:=e_1,\ \ldots,\ v_d:=e_d,\ v_{d+1}:=-(e_1+\cdots+e_d),
\] 
\[
y_1:=e_{d+1},\ \ldots,\ y_{n-d-1}:=e_{n-1},\ 
y_{n-d}:=e_1+e_{d+1}+\cdots+e_{n-1}+2e_n. 
\]
Let $\Sigma$ be the fan in $N$ whose maximal cones are 
generated by 
$\{v_1,\ldots,v_{d+1},y_1,\ldots,y_{n-d}\}\setminus\{v_i\}$ 
for $1\le i\le d+1$. 
In this case, $X=X_\Sigma$ has a Fano contraction 
whose general fiber is isomorphic to $\mathbb{P}^d$. 
Moreover, every fiber with the reduced structure 
is isomorphic to $\mathbb P^d$ (see Remark \ref{fanofiber}). 
However, 
$X$ does not decompose into 
$\mathbb{P}^d$ and a toric affine $(n-d)$-fold, because 
\[
\frac{p(y_1)+\cdots+p(y_{n-d})}{2}=
e_{d+1}+\cdots+e_n\in\mathbb{Z}^{n-d},
\]
while
\[
\frac{y_1+\cdots+y_{n-d}}{2}=
\frac{1}{2}e_1+e_{d+1}+\cdots+e_n\not\in N.
\]
From this noncomplete variety, 
one can easily construct a projective toric $n$-fold 
which has a Fano contraction associated to an 
extremal ray of length $\frac{d+1}{2}$ 
(for example, add the generator $y_{n-d+1}:=
-(e_{d+1}+\cdots+e_n)$ and 
compactify $\Sigma$). 
\end{ex}

If we make the inequality in Theorem \ref{maintheorem} 
stronger, then the assumption that a general fiber of a 
Fano contraction is isomorphic 
to the projective space automatically holds as follows. 

\begin{cor}\label{maincor}
Let $X=X_\Sigma$ be a $\mathbb{Q}$-factorial projective toric $n$-fold. 
Let $\varphi_R:X\to W$ be a Fano contraction associated to a $K_X$-negative 
extremal ray $R\subset\NE(X)$, and $d=n-\dim W$ 
be the dimension of a fiber of 
$\varphi_R$. 
If $-K_X\cdot C>d$ holds 
for any curve $C$ on $X$ contracted by $\varphi_R$, 
then $\varphi_R$ is 
a $\mathbb{P}^d$-bundle over $W$. 
\end{cor}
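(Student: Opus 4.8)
The plan is to derive Corollary~\ref{maincor} from Theorem~\ref{maintheorem} by verifying its two hypotheses under the stronger numerical bound $-K_X\cdot C>d$. Two things must be checked: first, that a general fiber of $\varphi_R$ is actually isomorphic to $\mathbb{P}^d$, and second, that the numerical inequality $-K_X\cdot C>\frac{d+1}{2}$ holds for every contracted curve. The second is immediate, since $d\ge \frac{d+1}{2}$ for every $d\ge 1$, so $-K_X\cdot C>d\ge\frac{d+1}{2}$. Hence the entire content of the argument is to produce the isomorphism of the general fiber with projective space; once that is in hand, Theorem~\ref{maintheorem} applies verbatim and delivers the $\mathbb{P}^d$-bundle conclusion.

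For the general fiber, recall from the combinatorial description in Section~\ref{f-sec3} (and Remark~\ref{fanofiber}) that the general fiber $F$ is a $\mathbb{Q}$-factorial projective toric variety with Picard number $\rho(F)=1$, whose fan $\Sigma_F$ lives in $N'=\mathbb{Z}e_1+\cdots+\mathbb{Z}e_d$ and has maximal cones $\langle\{v_1,\ldots,v_{d+1}\}\setminus\{v_i\}\rangle$. So $F$ is a $\mathbb{Q}$-factorial projective toric $d$-fold of Picard number one, exactly the setting of Theorem~\ref{rho1}. The strategy is to estimate $l(R_F)$ for the unique extremal ray $R_F=\NE(F)$ and invoke part~(1) of that theorem. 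First I would observe that the torus invariant curves on $X$ contracted by $\varphi_R$ include the curves lying in $F$, and that for a curve $C_F\subset F$ one has $-K_X\cdot C_F = -K_F\cdot C_F$ because the relative canonical divisor $K_X - \varphi_R^*K_W$ restricts to $K_F$ on a general fiber (equivalently, the divisors $E_1,\ldots,E_{n-d}$ and the $H$'s pulled back from $W$ meet such curves trivially, leaving only the contribution of $D_1,\ldots,D_{d+1}$, which are precisely the divisors cutting out $F$). Therefore the hypothesis $-K_X\cdot C>d$ forces $l(R_F)=\min_{[C_F]\in R_F}(-K_F\cdot C_F)>d=\dim F$.

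With $l(R_F)>\dim F$ established, Theorem~\ref{rho1}~(1) gives $F\simeq\mathbb{P}^d$ immediately, which is the remaining hypothesis of Theorem~\ref{maintheorem}. Applying that theorem concludes that $\varphi_R$ is a $\mathbb{P}^d$-bundle over $W$, as desired. The step I expect to require the most care is the identification $-K_X\cdot C_F=-K_F\cdot C_F$ for curves in the general fiber: one must be sure that all torus invariant divisors not among $D_1,\ldots,D_{d+1}$ intersect these fiber curves in degree zero, so that the adjunction-type restriction $(-K_X)|_F=-K_F$ is legitimate. In the toric setting this is transparent from the splitting $N=N'\oplus N''$ and the fact that the projection $p$ contracts exactly the curves spanned by the $v_i$, but it is the one point where the combinatorial structure of Section~\ref{f-sec3} must be used explicitly rather than quoted. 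Everything else is either the trivial numerical comparison $d\ge\frac{d+1}{2}$ or a direct citation of Theorem~\ref{rho1}~(1) and Theorem~\ref{maintheorem}.
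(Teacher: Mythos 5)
Your proposal is correct and follows exactly the paper's own argument: verify $d\ge\frac{d+1}{2}$, use adjunction on a general fiber $F$ to get $-K_X\cdot C=-K_F\cdot C>d$ for curves in $F$, conclude $F\simeq\mathbb{P}^d$ by Theorem \ref{rho1} (1), and then apply Theorem \ref{maintheorem}. The only difference is that you spell out the toric justification of the adjunction step, which the paper simply cites as ``by adjunction.''
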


\begin{proof}
Let $F$ be a general fiber of $\varphi_R$ and let $C$ be 
any curve on $F$. 
Then, by adjunction, we have 
\[
d< -K_X\cdot C= -K_F\cdot C. 
\]
Therefore, by Theorem \ref{rho1} (1), $F\simeq \mathbb{P}^d$ holds. 
Since $\frac{d+1}{2}\le d$, we can apply Theorem \ref{maintheorem}. 
\end{proof}

As an easy consequence of Corollary \ref{maincor}, we obtain: 

\begin{cor}
Let $X=X_\Sigma$ be a $\mathbb Q$-factorial 
projective toric $n$-fold and let $\Delta$ be 
any effective {\em{(}}not necessarily torus invariant{\em{)}} 
$\mathbb R$-divisor on $X$. 
Let $\varphi_R: X\to W$ be a Fano 
contraction associated to a $(K_X+\Delta)$-negative 
extremal ray $R\subset \NE (X)$ 
with $d=n-\dim W$.  
If $-(K_X+\Delta)\cdot C>d$ for any curve 
$C$ on $X$ contracted by $\varphi_R$, 
then $\varphi_R$ is a $\mathbb P^d$-bundle over 
$W$.   
\end{cor}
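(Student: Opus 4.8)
The plan is to reduce the final corollary to Corollary \ref{maincor}, which concerns the canonical divisor $K_X$ alone and requires no information about $\Delta$. The essential observation is that the extremal contraction $\varphi_R$ and the curve classes contracted by it are purely combinatorial objects determined by the fan $\Sigma$, so the presence of the divisor $\Delta$ affects only the numerical hypothesis $-(K_X+\Delta)\cdot C>d$, not the geometry of the contraction itself. Thus the whole task is to pass from the intersection number $-(K_X+\Delta)\cdot C$ to $-K_X\cdot C$ for every contracted curve $C$.

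First I would observe that since $R$ is $(K_X+\Delta)$-negative and $\varphi_R$ is its associated contraction, the set of curves $C$ contracted by $\varphi_R$ is exactly the set of curves whose numerical class lies in $R$. For such a curve, both $-(K_X+\Delta)\cdot C$ and $-K_X\cdot C$ depend only on the ray $R$ (up to positive scaling), so it suffices to compare the two pairings on a single curve class spanning $R$. The key step is then to show the inequality $-K_X\cdot C\ge -(K_X+\Delta)\cdot C$ for every contracted curve $C$. This follows immediately from $\Delta\cdot C\ge 0$: indeed $-K_X\cdot C=-(K_X+\Delta)\cdot C+\Delta\cdot C$, so once we know $\Delta\cdot C\ge 0$ the desired chain $-K_X\cdot C\ge -(K_X+\Delta)\cdot C>d$ is automatic, and Corollary \ref{maincor} applies verbatim to conclude that $\varphi_R$ is a $\mathbb{P}^d$-bundle over $W$.

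The one point requiring genuine care is the nonnegativity $\Delta\cdot C\ge 0$, and this is where I expect the main (though modest) obstacle to lie, precisely because $\Delta$ is allowed to be an arbitrary effective $\mathbb{R}$-divisor that need not be torus invariant. A torus invariant effective divisor would meet a contracted torus invariant curve nonnegatively in a transparent combinatorial way, but for a general effective $\Delta$ one must argue more carefully. The cleanest route is to note that $\varphi_R$ is a Fano contraction, so the contracted curves move in families covering the fibers; equivalently, a general fiber $F$ is covered by curves numerically proportional to the generator of $R$. Since $\Delta$ is effective and a very general such covering curve is not contained in $\mathrm{Supp}(\Delta)$, one obtains $\Delta\cdot C\ge 0$ for a suitable contracted curve $C$ spanning $R$, and by numerical proportionality the sign of $\Delta\cdot C$ is the same for every curve class in $R$. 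Hence $\Delta\cdot C\ge 0$ holds for all contracted curves.

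Having secured $\Delta\cdot C\ge 0$, the argument closes with no further computation: for any contracted curve $C$ we have
\[
-K_X\cdot C\ge -(K_X+\Delta)\cdot C>d,
\]
and Corollary \ref{maincor} yields that $\varphi_R$ is a $\mathbb{P}^d$-bundle over $W$. I would emphasize that this corollary is genuinely just a softening of the hypothesis: the geometric content is entirely carried by the $K_X$-version in Corollary \ref{maincor}, and the role of the effective $\mathbb{R}$-divisor $\Delta$ is only to make the length-type inequality easier to satisfy.
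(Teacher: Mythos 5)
Your proposal is correct and follows essentially the same route as the paper: both reduce to Corollary \ref{maincor} via the inequality $d<-(K_X+\Delta)\cdot C\leq -K_X\cdot C$, which rests on $\Delta\cdot C\geq 0$ for curves contracted by the Fano contraction. The paper simply asserts this nonnegativity for any effective Weil divisor because $\varphi_R$ is a toric Fano contraction, whereas you spell out the standard justification (contracted curves cover the fibers, so a general one is not contained in $\operatorname{Supp}(\Delta)$, and proportionality of classes in $R$ does the rest); this is a fair elaboration of the same idea, not a different proof.
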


\begin{proof}
We can easily see that $D\cdot C\geq 0$ for any effective Weil divisor 
$D$ on $X$ and any curve $C$ on $X$ contracted by $\varphi_R$ 
since $\varphi_R: X\to W$ is a toric Fano contraction 
of a $\mathbb Q$-factorial projective toric variety $X$. 
Therefore, we get 
$$
d< -(K_X+\Delta)\cdot C\leq -K_X\cdot C
$$ 
for any curve $C$ on $X$ contracted by $\varphi_R$. 
Thus, we see that $\varphi_R:X\to W$ is a $\mathbb P^d$-bundle 
over $W$ by Corollary \ref{maincor}. 
\end{proof}

The following example shows that 
Corollary \ref{maincor} is sharp.

\begin{ex}\label{wtproj}
Let $F:=\mathbb{P}(1,1,2,\ldots,2)$ be the $d$-dimensional 
weighted projective space and 
$W$ a $\mathbb{Q}$-factorial projective toric $(n-d)$-fold. 
Then, the length of 
the extremal ray corresponding to the first projection 
$\varphi:X=W\times F\to W$ is $d$ 
(see \cite[Proposition 2.1]{fujino-osaka} and 
\cite[Proposition 3.1.6]{fujino-sato3}). 
\end{ex}



\begin{thebibliography}{CLS}
\bibitem[CLS]{cls}
D.~A.~Cox, J.~B.~Little and H.~K.~Schenck, 
{\em{Toric varieties}}, 
Graduate Studies in Mathematics, 
\textbf{124}. American Mathematical Society, Providence, RI, 2011.

\bibitem[F1]{fujino-notes} 
O.~Fujino, 
Notes on toric varieties from Mori theoretic viewpoint, 
Tohoku Math. J. (2) {\textbf{55}} (2003), no. 4, 
551--564. 

\bibitem[F2]{fujino-osaka}
O.~Fujino, Toric varieties whose canonical divisors are 
divisible by their dimensions, Osaka J. Math. {\textbf{43}} 
(2006), no. 2, 275--281. 

\bibitem[FS1]{fujino-sato} 
O.~Fujino and H.~Sato, 
Introduction to the toric Mori theory, 
Michigan Math. J. \textbf{52} (2004), no. 3, 649--665.

\bibitem[FS2]{fujino-sato3} 
O.~Fujino and H.~Sato, 
Notes on toric varieties from Mori theoretic viewpoint, II, 
to appear in Nagoya Math. J. 

\bibitem[Fu]{fulton}
W.~Fulton, {\em{Introduction to toric varieties}}, 
Annals of Mathematics Studies, {\textbf{131}}. 
The William H. Roever Lectures in Geometry. 
Princeton University Press, Princeton, NJ, 
1993.  

\bibitem[Ma]{matsuki} 
K.~Matsuki, {\em{Introduction to the Mori program}}, 
Universitext, Springer-Verlag, New York, 2002. 

\bibitem[Mo]{mori}
S.~Mori, Threefolds 
whose canonical bundles are not 
numerically effective, 
Ann. of Math. (2) \textbf{116} (1982), no. 1, 133--176. 

\bibitem[O]{oda}
T.~Oda, 
{\em{Convex bodies and algebraic geometry, 
An introduction to the theory of toric varieties}}, 
Translated from the 
Japanese, Results in Mathematics and Related
Areas (3) {\textbf{15}}, Springer-Verlag, Berlin, 1988.

\bibitem[R]{reid}
M.~Reid, Decomposition of toric morphisms, 
{\em{Arithmetic and geometry, Vol.~II}}, 395--418, Progr. Math. {\textbf{36}},
Birkh\"auser Boston, Boston, MA, 1983. 

\end{thebibliography}
\end{document}